\newcommand{\Q}{\mathbb{Q}}
\renewcommand{\P}{\mathbb{P}}
\renewcommand{\l}{\ell}
\renewcommand{\mod}[1]{\,({\mathrm{mod\;\,} #1})}
\newcounter{casenum}
\newcounter{casenum1}
\tikzset{%
  middle dotted line/.style={
    decoration={show path construction, 
      lineto code={
          \draw[#1] (\tikzinputsegmentfirst) --($(\tikzinputsegmentfirst)!.3333!(\tikzinputsegmentlast)$);,
          \draw[dotted,#1] ($(\tikzinputsegmentfirst)!.3333!(\tikzinputsegmentlast)$)--($(\tikzinputsegmentfirst)!.6666!(\tikzinputsegmentlast)$);,
          \draw[#1] ($(\tikzinputsegmentfirst)!.6666!(\tikzinputsegmentlast)$)--(\tikzinputsegmentlast);,
      }
    },
    decorate
  },
}
\newcounter{rawrem} \setcounter{rawrem}{0}
\newcounter{rawclaim} \setcounter{rawclaim}{0}
\newcounter{raw} \setcounter{raw}{0}
\newcounter{dummy} \numberwithin{dummy}{section}
\newtheorem{defn}[raw]{Definition}
\newtheorem{rem}[rawrem]{Remark}
\newtheorem{lemma}[dummy]{Lemma}
\newtheorem{thrm}[dummy]{Theorem}
\newtheorem{propn}[dummy]{Proposition}
\theoremstyle{definition}
\newenvironment{bproof}[1][\proofname] {\par\pushQED{\qed}\normalfont\topsep6\p@\@plus6\p@\relax\trivlist\item[\hskip\labelsep\bfseries#1\@addpunct{.}]\ignorespaces}{\popQED\endtrivlist\@endpefalse}
\newcommand{\emailad}[1]{\textit{Email address: }\texttt{\href{mailto:#1}{#1}}}
\newcommand{\rbx}{\mathrm{\mathbf{x}}}
\newcommand{\trunk}{T^{\circ}}
\DeclareMathOperator{\diam}{diam}
\DeclareMathOperator{\twig}{\textsc{Twig}}
\newcommand{\compalpha}[1]{\alpha_1\,\alpha_2\,\cdots\,\alpha_{#1}}
\newcommand{\compbeta}[1]{\beta_1\,\beta_2\,\cdots\,\beta_{#1}}
\newcommand{\U}{\mathit{U}}
\newcommand{\rbX}{\mathrm{\mathbf{X}}}
\newcommand{\csf}[1]{\operatorname{\rbX}_{#1}}
\newcommand{\Sym}[1]{\operatorname{Sym}_{#1}(\rbx)}
\newcommand{\Kappa}{\scalebox{1.1}{$\kappa$}}
\newcommand{\ps}{\scalebox{1.2}{$p$}}
\newcommand{\mult}{\!\cdot\!}
\renewcommand{\thefootnote}{\textit{\alph{footnote}}}
\renewcommand{\thefootnote}{\textit{\alph{footnote}}}
\renewcommand{\thanks}[1]{\textbf{Please don't use ``thanks''.}}
\def\get@fnmark#1{%
  \begingroup
     \c@footnote #1\relax
     \unrestored@protected@xdef\@thisfnmark{\thefootnote}%
  \endgroup
  \@thisfnmark}
\def\authornote#1{\unskip\kern0.03em\proc@anote#1,,,,\endproc@anote}
\def\proc@anote#1,#2,#3,#4,#5\endproc@anote
\def\@anoteA{\get@fnmark{#1}}%
\let\@anoteB=\relax\else\def\@anoteB{,\get@fnmark{#2}}\fi
\let\@anoteC=\relax\else\def\@anoteC{,\get@fnmark{#3}}\fi
\let\@anoteD=\relax\else\def\@anoteD{,\get@fnmark{#4}}\fi
\long\def\the@authortext{}
\long\def\authortext#1#2{%
  \ifx\relax#1\relax
  \g@addto@macro\the@authortext{\footnotetext[0]{\raggedright #2}}\else
  \ifx#1 
  \g@addto@macro\the@authortext{\footnotetext[0]{\raggedright #2}}\else
  \g@addto@macro\the@authortext{\footnotetext[#1]{\raggedright #2}}\fi\fi}
\newcommand\email[1]{\texttt{#1}}
\long\def\@makefntext#1{\parindent 1.02em\noindent
   \everypar={\hangindent=1.02em}\hangindent=1.02em
  \hb@xt@1em{\hss\@textsuperscript{\normalfont\@thefnmark}}\kern0.02em #1}
\renewcommand\maketitle{\par
  \begingroup
    \def\@makefnmark{\rlap{\@textsuperscript{\normalfont\@thefnmark}}}%
    \newpage
    \global\@topnum\z@   
    \the@authortext
    \@maketitle
    \thispagestyle{plain} 
  \endgroup
  \setcounter{footnote}{0}%
  \renewcommand{\thefootnote}{\arabic{footnote}}
  \global\let\maketitle\relax
  \global\let\@maketitle\relax
  \global\let\@author\@empty
  \global\let\@date\@empty
  \global\let\@title\@empty
  \global\let\title\relax
  \global\let\author\relax
  \global\let\date\relax
  \global\let\and\relax}
\title{Proper $q$-caterpillars are distinguished by their Chromatic Symmetric Functions}
\author{G. Arunkumar\authornote{1}
\and Narayanan Narayanan\authornote{1}
\and  Raghavendra Rao B. V.\authornote{2}
\and Sagar S. Sawant\authornote{1}}
\date{}
\begin{document}

\maketitle

\begin{abstract}
    Stanley's Tree Isomorphism  Conjecture posits that the chromatic symmetric function can distinguish non-isomorphic trees. This conjecture is already established for caterpillars and other subclasses of trees. We prove the conjecture's validity for a new class of trees that generalize proper caterpillars, thus confirming the conjecture for a broader class of trees.

\end{abstract}
\noindent\textbf{Mathematics Subject Classification$(2010)$: } 05C15, 05C25, 05C31, 05C60.\\[1ex]\textbf{Keywords: }chromatic symmetric function, $U$-polynomial, integer compositions, caterpillars.

\section{Introduction}

The chromatic symmetric function, introduced by R. Stanley~\cite{Stanley1995}, generalizes the chromatic polynomial of a graph to a symmetric function and is widely studied as it determines  generating functions of various graph statistics. This raises the question: Does the chromatic symmetric function distinguish all graphs up to isomorphism? Unfortunately, the answer is negative. Stanley presented two non-isomorphic graphs (both containing cycles) that share the same chromatic symmetric function. However, the question remains open for trees and is conjectured to be true, famously known as Stanley's Tree Isomorphism  Conjecture. Substantial progress has been made in confirming the conjecture for various subclasses of trees. Martin et al. \cite{Martin2008OnDT} proved its validity for specific classes of caterpillars, spiders, and certain unicyclic graphs. Additionally, Aliste-Prieto and Zamora \cite{AlistePrieto2014} showed that the conjecture holds for proper caterpillars, while Loebl and Sereni \cite{loebl} extended this result to all caterpillars. To explore additional examples of graphs that can be distinguished based on their chromatic symmetric function, refer to \cite{AlistePrieto2022, crewnote,  spiders-and-their-kin, huryn,Tsujie-CSF}.

Let $\Q$ and $\P$ denote the rational numbers and positive integers, respectively. In this paper, we consider the following generalization of proper caterpillars.
\begin{defn}[proper $q$-caterpillars] 
	Let $q\geq1$ be fixed. A proper $q$-caterpillar $T$ is constructed as follows: We begin with a path $S = \langle v_1,\ldots, v_\ell \rangle$ (with endpoints $v_1$ and $v_{\l}$)  called the spine, with $\ell >0$. For every $1 \le i \le \ell$, we glue (endpoint of the path identified with a vertex on the spine) $p_i$ additional paths of length exactly $q$ to the vertices $v_i$, respectively, where $p_i \in \P$.
\end{defn}
 In this context, proper $1$-caterpillars have been distinguished by their chromatic symmetric functions up to isomorphism~\cite{AlistePrieto2014}. We prove that for all $q \geq 2$, the proper $q$-caterpillars are distinguished by their chromatic symmetric function.

\begin{thrm}{\label{thm:proper-p-caterpillars}}
    For $q>1$, the chromatic symmetric function distinguishes isomorphism classes of proper $q$-caterpillars.
\end{thrm}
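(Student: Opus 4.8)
The plan is to reduce Theorem~\ref{thm:proper-p-caterpillars} to a purely combinatorial statement about integer compositions, and then to resolve that statement by a peeling argument.

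\textbf{Reduction to compositions.} First I would observe that, since $q$ is fixed, a proper $q$-caterpillar is encoded up to isomorphism by its spine–leg vector $\alpha=(p_1,\dots,p_\ell)\in\P^\ell$: the branch vertices and the length-$q$ pendant paths are canonically recognizable (a spine endpoint carrying a single leg produces a pendant path of length $q+1$, which is distinguishable from a genuine leg), so the spine is intrinsic, and two vectors give isomorphic trees exactly when they agree up to the reversal $\alpha\mapsto(p_\ell,\dots,p_1)$. Thus the theorem is equivalent to the assertion that the chromatic symmetric function $X_T$ determines $\alpha$ up to reversal.

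\textbf{From $X_T$ to forest data.} I would pass to the power-sum expansion $X_T=\sum_{F}(-1)^{|E(F)|}p_{\lambda(F)}$, summed over spanning forests $F$ of $T$, where $\lambda(F)\vdash n$ records the component sizes of $F$. As a tree has $r(A)=|A|$ for every edge set, the sign of each term depends only on $\lambda(F)$, so $X_T$ is equivalent to the $U$-polynomial and records, for every partition $\lambda\vdash n$, the number $a_\lambda(T)$ of edge subsets with component-size partition $\lambda$. The entire argument then takes place inside this combinatorial shadow $\{a_\lambda(T)\}_\lambda$ of $X_T$.

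\textbf{Isolating the spine.} The key extraction is to restrict to partitions all of whose parts exceed $q$. I claim that for such $\lambda$ the forests counted by $a_\lambda(T)$ are precisely those keeping every leg fully intact and attached while deleting an arbitrary set of spine edges: cutting any edge of a leg detaches a path component of size at most $q$, which is forbidden once every part is at least $q+1$; and, by properness ($p_k\ge 1$), every spine-block carries at least one full leg and so has size at least $q+1$ automatically. Writing $b_k:=1+q\,p_k$ for the weight of $v_k$ together with its legs, it follows that $a_\lambda(T)$ equals the number $N_\lambda$ of ways to cut the sequence $(b_1,\dots,b_\ell)$ into consecutive blocks with block-sum multiset $\lambda$. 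Here $\ell$ is recovered as the largest number of parts among such $\lambda$ with $a_\lambda(T)\ne0$ (the finest cutting, whose parts are exactly the multiset $\{b_k\}$), and then $\sum_k p_k=(n-\ell)/q$; since $p\mapsto 1+qp$ is injective it remains to recover $(b_1,\dots,b_\ell)$ up to reversal from $\{N_\lambda\}$.

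\textbf{The composition lemma and the main obstacle.} The finest cutting already yields the multiset $\{b_k\}$, so the difficulty is the order, and I expect this to be the crux. The two-block counts alone determine only the symmetric closure $\{P_i\}\cup\{n'-P_i\}$ of the prefix-sum set ($P_i=b_1+\dots+b_i$, $n'=\sum_k b_k$), which is genuinely insufficient — one can exhibit sequences sharing this data yet differing even after reversal — so the proof must exploit the full higher-order data. My plan is a threading induction: from the two- and three-block counts one reconstructs the admissible complementary pairs $\{P_i,n'-P_i\}$ together with the compatibility constraints governing which pairs may co-occur in a common cutting, and then shows that the only ways to thread these constraints into a single increasing chain of prefix sums are the true chain and its reflection $P_i\mapsto n'-P_i$, i.e. $\alpha$ and its reversal. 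The rigidity that rules out spurious threadings is the arithmetic available for $q\ge2$: since each $b_k\equiv 1\pmod q$, every block-sum reveals the number of spine vertices it spans modulo $q$, a constraint that is vacuous when $q=1$ and is precisely what pins the ordering down here. Carrying out this threading argument carefully is the main obstacle.
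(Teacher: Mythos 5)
Your reduction steps are sound and in fact coincide with the paper's: the encoding of a proper $q$-caterpillar by the vector $(b_1,\dots,b_\ell)$ with $b_k=1+qp_k$ is the paper's map $\varphi$, and your observation that partitions with all parts exceeding $q$ force every leg to stay intact, so that $a_\lambda(T)$ counts cuttings of $(b_1,\dots,b_\ell)$ into consecutive blocks, is exactly the paper's Lemma~\ref{lem:u-poly-R-poly equiv} (the evaluation $\U_T(0,\dots,0,x_{q+1},\dots)=\mathcal{L}(\varphi(T);\rbx)$). Up to that point you have correctly reduced the theorem to: \emph{two compositions with all parts $>1$ and congruent to $1$ modulo $q$ that have the same multiset data of block-sum cuttings (i.e.\ the same $\mathcal{L}$-polynomial) must agree up to reversal.}

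But that reduced statement is the entire content of the theorem, and you do not prove it; you sketch a ``threading induction'' and then explicitly defer it (``Carrying out this threading argument carefully is the main obstacle''). This is a genuine gap, and not a routine one. Reconstructing a sequence from unordered block-sum data is a turnpike-type problem with non-unique solutions in general; moreover, equality of $\mathcal{L}$-polynomials genuinely does \emph{not} imply equality up to reversal for arbitrary compositions --- by the Billera--Thomas--van~Willigenburg classification (Theorem~\ref{thm:factorize} in the paper), a composition whose irreducible factorization under $\circ$ has $k$ non-palindromic factors has an $\mathcal{L}$-equivalence class of size $2^k$, containing compositions that are not reverses of each other. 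So any correct argument must rule out precisely these exotic coincidences, which is what the paper does: it invokes the unique irreducible factorization in the monoid $(\mathscr{C},\circ)$ and proves (Lemma~\ref{lem:compfact}, via the same mod-$q$ observation you identify, namely that a nontrivial factor would force some part $\equiv 2h \pmod q$) that the factorization of $\varphi(T)$ has at most one non-palindromic factor, after first extracting the gcd $d$ of the parts as a right factor $\varepsilon\circ d$. Your proposal contains the right arithmetic ingredient but not the structural framework (the factorization monoid) in which that ingredient closes the argument; without it, the threading plan would essentially require you to re-derive the BTvW classification from scratch, and as written it also does not address the case where the parts $b_k$ have a common divisor $d>1$.
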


 \noindent The proof uses ideas involved in \cite{AlistePrieto2014}, that is, associating proper $q$-caterpillars with the integer compositions, and the interrelations of the chromatic symmetric function, $U$-polynomial and $\mathcal{L}$-polynomial. The $\U$-polynomial, introduced by Noble and Welsh \cite{noblewelsh}, is a Tutte-Grothendieck invariant equivalent to the chromatic symmetric function when restricted to trees; that is, one can be obtained from the other by certain change of variables. Consequently, Stanley's Tree Isomorphism  Conjecture is equivalent to distinguishing trees by their $\U$-polynomial.

 Note that for $q \geq 2$, every integer composition $(p_1,p_2,\dots,p_{\l})$ with each component being positive corresponds to a unique proper $q$-caterpillar with $p_i$ number of length $q$ paths incident to the vertex $v_i$ on the spine $\langle v_1,v_2,\dots,v_{\l} \rangle$. Therefore Theorem \ref{thm:proper-p-caterpillars} states that for each such integer composition, there are infinitely many trees (one for each $q\geq 2$) that can be distinguished by chromatic symmetric function, thereby attaining a significant improvement in the pool of trees that are known to satisfy Stanley's Tree Isomorphism  Conjecture.

In Section \ref{subsec:intro}, we introduce fundamental graph notions and preliminaries, including the chromatic symmetric function, Tutte symmetric function, and $U$-polynomial. We explore the interrelations between these functions. In Section \ref{subsec:characterize-proper-cat}, we present a characterization of the proper $q$-caterpillar based on the tree statistics encoded in the chromatic symmetric function. Additionally, we define the $\mathcal{L}$-polynomial associated with integer compositions, which plays a crucial role in determining the isomorphism classes of proper $q$-caterpillars. The factorization of integer compositions is discussed in Section \ref{subsec:monoidcomp}, which specifies integer compositions with the same $\mathcal{L}$-polynomial. Section \ref{subsec:proofcat} establishes a correspondence between proper $q$-caterpillars and integer compositions, where the $U$-polynomial of the caterpillars determines the $\mathcal{L}$-polynomial of the corresponding composition. This, combined with the factorization in Section \ref{subsec:monoidcomp}, leads to the proof of Theorem \ref{thm:proper-p-caterpillars}. 


\subsection{Preliminaries}{\label{subsec:intro}}

Let $G = (V, E)$ be a simple graph with the set of vertices $V$ and the set of edges $E$. A \emph{$\P$-coloring} of a graph $G$ is a function $f: V \rightarrow \P$, and such a coloring is said to be \emph{proper} if for every edge $uv \in E$, the colors $f(u)$ and $f(v)$ are distinct. The \emph{content of a coloring} $f$ is the $\P$-tuple $\left(\vert f^{-1}(1) \vert,\vert f^{-1}(2) \vert,\vert f^{-1}(3) \vert,\dots\right)$ denoted by $c(f)$, that encodes the cardinality of the color classes of $f$. Throughout this paper, whenever we consider a $\P$-tuple, we assume that its all but finitely many components are zero. In what follows, we adopt the graph notions and terminology in accordance with~\cite{bondy2011graph}.

Let $\rbx= (x_1,x_2,\dots)$ be the collection of commutative indeterminates. For a $\P$-tuple $\alpha$, let $\rbx^{\alpha}$ be the monomial having the $i^{\mathrm
{th}}$ component of $\alpha$ as the exponent of $x_i$. A \emph{symmetric function} is a formal power series in indeterminates $\rbx$ that is invariant under all permutations of $\rbx$. Let $\Sym{R}$ denote the collection of symmetric functions with coefficients over ring $R$. We now define the two symmetric functions introduced by Stanley~\cite{Stanley1998, Stanley1995}.

The \emph{chromatic symmetric function} of a graph $G = (V,E)$ is defined as
\begin{equation}
    \csf{G} := \sum_{\substack{f:V \rightarrow \P   \\   \textit{proper}}} \rbx^{c(f)}.
\end{equation}

The above function is indeed symmetric in $\rbx$ since the permutation of the colors does not affect the properness of colorings. Also, the functions are homogeneous in $\rbx$ with degree $|V|$. 

The coefficients arising in the expansion of the chromatic symmetric function in various basis of the $\Sym{\Q}$ encodes numerous combinatorics of the graph. We are particularly interested in the expansion with respect to the power sum symmetric function basis. 

For $k \in \P$, the \emph{power sum symmetric function of degree $k$} is defined as 
\begin{equation*}
    \ps_k(\rbx) = \sum_{i \in \P} x_i^k.
\end{equation*}

An integer \emph{partition} of a positive integer $n$ is a weakly decreasing sequence of positive integers whose sum is $n$. For any partition $\lambda=\lambda_1 \, \lambda_2 \, \cdots \,\lambda_k \vdash n$, the power sum symmetric function
\begin{equation*}
    \ps_{\lambda}(\rbx) = \prod_{i=1}^k \ps_{\lambda_i}(\rbx),
\end{equation*}
and the collection $\left\{\ps_{\lambda}\right\}_{n,\lambda\vdash n}$ forms a $\Q$-basis of $\Sym{\Q}$. 

 Given a graph $G=(V,E)$ and a subset $F \subseteq E$, let $\lambda[F]$ be the partition of $|V|$ formed by the orders of the connected components of the spanning subgraph $G[F]$. 
\begin{thrm}[{\cite[Theorem 2.5]{Stanley1995}}]
    For a graph $G$, the expansion of the chromatic symmetric function in the power sum symmetric function basis is
    \begin{equation}{\label{eq:csfpowersum-expansion}}
        \csf{G} = \sum_{F \subseteq E} (-1)^{|F|}\ps_{\lambda[F]}(\rbx).
    \end{equation}
\end{thrm}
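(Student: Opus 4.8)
The plan is to prove the identity by inclusion--exclusion over the monochromatic edges of a coloring, thereby converting the sum over \emph{proper} colorings in the definition of $\csf{G}$ into an unrestricted sum over \emph{all} colorings that can be evaluated directly. First I would rewrite the weight attached to a single coloring: for any $f : V \to \P$ one has $\rbx^{c(f)} = \prod_{v \in V} x_{f(v)}$, since the content $c(f)$ records precisely how many vertices receive each color. This turns the chromatic symmetric function into a sum of products of the indeterminates indexed by vertices.

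Next, for an edge $e = uv$ let $[f(u) = f(v)]$ denote the indicator that $e$ is monochromatic under $f$. A coloring is proper exactly when no edge is monochromatic, so I would encode properness through the product $\prod_{e = uv \in E}\bigl(1 - [f(u)=f(v)]\bigr)$, which equals $1$ for proper $f$ and $0$ otherwise. Expanding this product over subsets $F \subseteq E$ gives $\sum_{F \subseteq E} (-1)^{|F|} \prod_{uv \in F}[f(u)=f(v)]$. Substituting this into the definition and extending the outer sum to all colorings (the non-proper ones now contributing $0$), I would interchange the two formal summations to obtain
\begin{equation*}
    \csf{G} = \sum_{F \subseteq E} (-1)^{|F|} \sum_{f : V \to \P} \Bigl(\prod_{uv \in F}[f(u)=f(v)]\Bigr)\prod_{v \in V} x_{f(v)}.
\end{equation*}

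Finally, I would evaluate the inner sum for a fixed $F$. The factor $\prod_{uv \in F}[f(u)=f(v)]$ is nonzero exactly when $f$ is constant on each connected component of the spanning subgraph $G[F]$, so the surviving colorings are in bijection with assignments of a single color to each component. If the components have orders $\lambda_1, \ldots, \lambda_k$ (so that $\lambda[F] = \lambda_1 \cdots \lambda_k$), then the inner sum factors over components as $\prod_{j} \sum_{c \in \P} x_c^{\lambda_j} = \prod_j \ps_{\lambda_j}(\rbx) = \ps_{\lambda[F]}(\rbx)$, which yields the claimed expansion.

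I expect the one point requiring care to be the interchange of the two summations, but since $\csf{G}$ is a formal power series that is homogeneous of degree $|V|$ and each monomial receives only finitely many contributions, this step is routine rather than a genuine obstacle. The combinatorial heart of the argument is instead the last step: recognizing that colorings constant on the components of $G[F]$ factor the inner sum into a product of power sums indexed exactly by the component orders, so that the partition $\lambda[F]$ emerges naturally.
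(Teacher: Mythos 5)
Your proof is correct: encoding properness as $\prod_{uv \in E}\bigl(1 - [f(u)=f(v)]\bigr)$, expanding over $F \subseteq E$, interchanging the (formally harmless) summations, and recognizing that the colorings constant on the components of $G[F]$ contribute exactly $\ps_{\lambda[F]}(\rbx)$ is precisely the standard inclusion--exclusion argument. The paper itself does not prove this statement --- it cites it as Theorem 2.5 of Stanley's 1995 paper --- and your argument is essentially Stanley's original proof, so there is nothing to correct or compare further.
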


The expansion \eqref{eq:csfpowersum-expansion} of the chromatic symmetric functions establishes a strong connection with the $\U$-polynomial defined by Noble and Welsh~\cite{noblewelsh}. For a partition $\lambda=\lambda_1 \, \lambda_2 \, \cdots \,\lambda_k \vdash n$, let $\rbx_{\lambda}$ denote the monomial $x_{\lambda_1}x_{\lambda_2}\cdots x_{\lambda_k}$, 

\begin{defn}
    Given a graph $G=(V,E)$, the $\U$-polynomial of the graph is defined as 
	\begin{equation*}
		\U_G(\rbx;y) = \sum_{F \subseteq E} \rbx_{\lambda[F]}(y-1)^{|F|-|V|+\Kappa(F)},
	\end{equation*}
where $\Kappa(F)$ is the number of connected components in the spanning subgraph $G[F]$, or equivalently the length of partition $\lambda[F]$. 
\end{defn}

Due to the fact that any spanning subgraph $T[F]$ of a tree $T=(V,E)$ must have $|V|-|F|$ connected components, $|F|-|V| + \Kappa(F) = 0$ for all $F \subseteq E$. Therefore, for any tree $T$, we have 
\begin{equation}{\label{eq:chromatic-U-equivalence}}
    (-1)^{|V|}\U_T\left({-}p_1(\rbx),{-}p_2(\rbx),{-}p_3(\rbx),\dots;y\right) = (-1)^{|V|}\sum_{F \subseteq E} (-1)^{\Kappa(F)}\ps_{\lambda[F]}(\rbx)=\csf{T}.
\end{equation}
This implies that the two graph invariants are equivalent when restricted to trees, that is, two trees have the same chromatic symmetric function if and only if they have the same $\U$-polynomial.


\section{Proper \emph{q}-Caterpillars}{\label{sec:proper-cat}}

We prove Theorem~\ref{thm:proper-p-caterpillars} in this section. 
We begin by characterizing proper $q$-caterpillars in terms of the statistics that can be identified through the chromatic symmetric function. This allows us to differentiate proper $q$-caterpillars from other types of trees.

\subsection{Characterization of proper \emph{q}-caterpillars}{\label{subsec:characterize-proper-cat}}
Given a tree $T=(V,E)$, the {\em trunk} $T^{\circ}$ of $T$ is the smallest subtree containing all vertices of degree at least three. For each pendant vertex $u$ of $T$, there exists a unique path starting at $u$ and ending at some vertex in the trunk such that all internal vertices of the path have degree two. Each such path is called a \emph{twig}, and let $\twig(T)$ be the multiset representing the lengths of twigs in $T$. Evidently, every tree containing a vertex of degree at least three can be decomposed into the trunk $\trunk$ and some twigs. Crew proved that the order of $\trunk$, and the multiset $\twig(T)$ can be determined by the chromatic symmetric function~\cite{crewnote}.

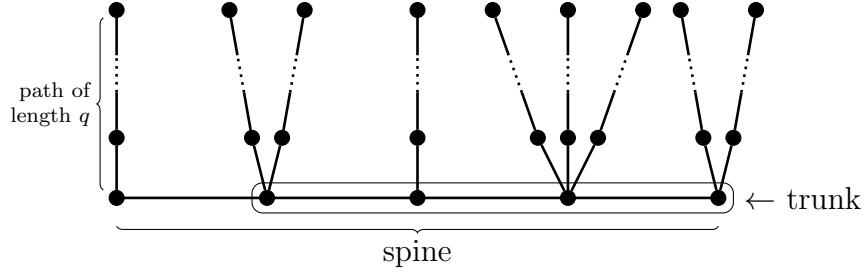
\begin{figure}
    \centering
    \begin{tikzpicture}[vertex/.style={draw,circle,fill=black,inner sep=2pt}]
        \foreach \x in {0,1,2,3,4}
            {
            \node[vertex] (v\x) at (2*\x,0){};
            }
        \foreach \x in {0,1,2,3}
            {
            \draw[black,line width=1pt] (2*\x,0) -- (2*\x+2,0);
            }

        \node[vertex] (u11) at (0,0.8) {};    
        \node[vertex] (u12) at (0,2.5) {};
        \draw[black,line width=1pt] (v0) -- (u11);
        \node[vertex,shift={(2,0)}] (u21) at (-0.2,0.8) {};
        \node[vertex,shift={(2,0)}] (u31) at (0.2,0.8) {};            
        \node[vertex,shift={(2,0)}] (u22) at (-0.5,2.5) {};
        \node[vertex,shift={(2,0)}] (u32) at (0.5,2.5) {};
        \foreach \x in {2,3} {\draw[black,line width=1pt] (v1) -- (u\x1) ;}
        \node[vertex,shift={(4,0)}] (u41) at (0,0.8) {};   \node[vertex,shift={(4,0)}] (u42) at (0,2.5) {};
        \draw[black,line width=1pt] (v2) -- (u41); 
        \node[vertex,shift={(6,0)}] (u51) at (-0.4,0.8) {};
        \node[vertex,shift={(6,0)}] (u61) at (0,0.8) {}; 
        \node[vertex,shift={(6,0)}] (u71) at (0.4,0.8) {}; 
        \node[vertex,shift={(6,0)}] (u52) at (-1,2.5) {};
        \node[vertex,shift={(6,0)}] (u62) at (0,2.5) {};
        \node[vertex,shift={(6,0)}] (u72) at (1,2.5) {};
        \foreach \x in {5,6,7} {\draw[black,line width=1pt] (v3) -- (u\x1) ;}    
        \node[vertex,shift={(8,0)}] (u81) at (-0.2,0.8) {};    
        \node[vertex,shift={(8,0)}] (u91) at (0.2,0.8) {};
        \node[vertex,shift={(8,0)}] (u82) at (-0.5,2.5) {};
        \node[vertex,shift={(8,0)}] (u92) at (0.5,2.5) {};
        \foreach \x in {8,9} {\draw[black,line width=1pt] (v4) -- (u\x1) ;}    
        \foreach \x in {1,2,3,4,5,6,7,8,9}
        {
        \draw[middle dotted line={line width=1pt}] (u\x1) -- (u\x2);
        }
        \draw[rounded corners,thin] (1.8,-0.2) rectangle (8.2,0.2) {};
        \draw[decoration={brace,raise=5pt},decorate]
        (v0) -- node[left=6pt,align=right] {\scriptsize path of \\[-5pt]  \scriptsize{length $q$}} (u12);
        \draw[decoration={brace,mirror,raise=5pt},decorate]
        (0,-0.2) -- node[below=6pt] {spine} (8,-0.2);
        
        \node[anchor=west] at (8.2,0) {$\leftarrow$ trunk};
    \end{tikzpicture}
    \caption{Example of a proper $q$-caterpillar with spine of order $5$, trunk of order $4$ and the multiset of twigs is $\{\protect\underbrace{q,q,\dots,q}_{\text{8 times}},q{+}1 \}$.}
    \label{fig:cattrunk}
\end{figure}

It is clear that a path $T$ is a proper $q$-caterpillar if and only if its order is either $q+1,2q+1$ or $2q+2$. For proper $q$-caterpillars that are not paths, we have the following characterization in terms of the tree-invariants that can be obtained from the chromatic symmetric function, such as, order of the tree, degree sequence, multiset of twigs and the diameter of the tree.
\begin{propn}{\label{prop: csf-q-caterpillars}}
	Let $q \geq 1$ be fixed and $T=(V,E)$ be a tree that is not a path. Then $T$ is a proper $q$-caterpillar if and only if it satisfies the following:
	\begin{enumerate}[label=(\roman*)]
		\item $|\trunk| = |V| - \delta_1 - \delta_2$ where $\delta_i$ is the number of vertices of degree $i$ in $T$.
		\item $\twig(T)$ only contains integers $q$ and $q{+}1$, with $m_{q{+}1} \leq 2$ where $m_{q{+}1}$ is the multiplicity of $q{+}1$ in $\twig(T)$.
		\item $\diam(T) = (|\trunk|-1)+2q+m_{q{+}1}$.
	\end{enumerate}
\end{propn}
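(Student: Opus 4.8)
The plan is to prove the two implications separately, with the backward direction carrying the real content. For the forward direction I would argue directly from the construction. Writing $P = \sum_{i=1}^{\l} p_i$, I first locate the branch vertices: every interior spine vertex $v_i$ has degree $2 + p_i \ge 3$, while a spine endpoint $v_1$ (resp. $v_{\l}$) has degree $1 + p_1$, hence is a branch vertex exactly when $p_1 \ge 2$. Thus the trunk is the sub-path of the spine spanned by the branch vertices, and each spine endpoint with $p = 1$ contributes a single twig of length $q+1$ (its length-$q$ glued path together with the incident spine edge), whereas every other glued path is a twig of length exactly $q$. Letting $s \in \{0,1,2\}$ be the number of endpoints with $p = 1$, a direct count gives $\delta_1 = P$, $\delta_2 = P(q-1) + s$, $|V| = \l + Pq$, and $|\trunk| = \l - s$, so (i) follows; (ii) holds because $m_{q{+}1} = s \le 2$; and (iii) follows since the longest pendant-to-pendant path runs end to end with length $2q + (\l - 1) = (|\trunk|-1) + 2q + m_{q{+}1}$. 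I would record the degenerate cases $\l = 1$ (needing $p_1 \ge 3$) and $\l = 2$ alongside, as they fit the same formulas.

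For the backward direction, the first step is to reinterpret (i). Since the trunk always contains every vertex of degree $\ge 3$ while its remaining vertices have degree exactly $2$, one has $|\trunk| \ge |V| - \delta_1 - \delta_2$ in general, with equality precisely when the trunk has no degree-$2$ vertex. So (i) is equivalent to saying \emph{every} trunk vertex has degree $\ge 3$, which is exactly what will forbid a ``bare'' spine vertex in the reconstruction. The crux is then to show the trunk is a path. Here I would combine two bounds: first, $\diam(\trunk) \le |\trunk| - 1$ with equality iff the trunk is a path; and second, by decomposing any pendant-to-pendant geodesic into a twig, a trunk geodesic, and a twig, and invoking (ii) (at most two twigs exceed $q$, each by $1$), one obtains $\diam(T) \le \diam(\trunk) + 2q + m_{q{+}1}$. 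Chaining these with (iii) forces every inequality to be an equality, whence $\diam(\trunk) = |\trunk|-1$ and the trunk is a path $\langle u_1, \dots, u_t\rangle$.

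The remaining step is to locate the long twigs and rebuild the caterpillar. Refining the geodesic estimate, I would show that a twig of length $q+1$ hanging at an interior trunk vertex $u_i$ ($1 < i < t$) would cap $\diam(T)$ strictly below $(t-1)+2q+m_{q{+}1}$, since reaching such a vertex forfeits at least one unit of trunk distance, contradicting (iii); hence all length-$(q+1)$ twigs sit at the endpoints $u_1, u_t$, with the two of them at opposite ends when $m_{q{+}1} = 2$ and $t \ge 2$. I then take the spine to be $\langle u_1, \dots, u_t\rangle$, prepended and/or appended by one extra vertex at each end that carries a $(q+1)$-twig (absorbing that twig into a new spine endpoint with $p = 1$ plus its length-$q$ glued path). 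Condition (i) guarantees that each $u_i$ retains at least one length-$q$ twig, i.e. the corresponding $p_{u_i} \ge 1$, so the output is a genuine proper $q$-caterpillar; the single-vertex trunk $t = 1$, where up to two $(q+1)$-twigs may share the one vertex, I would dispatch as a small separate case.

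I expect the main obstacle to be the diameter argument of the backward direction — in particular pinning down the upper bound $\diam(T) \le \diam(\trunk) + 2q + m_{q{+}1}$ correctly (including the degenerate geodesics that never leave a single trunk vertex), and then extracting from the equality case both that the trunk is a path and that the long twigs are forced to its extremities. By contrast, the degree bookkeeping of the forward direction and the handful of low-$\l$ and small-trunk edge cases are routine, but they must be stated explicitly to make the equivalence airtight.
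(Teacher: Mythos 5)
Your proposal is correct and follows essentially the same route as the paper: the same reinterpretation of condition (i), the same chained bounds $\diam(T) \le \diam(\trunk) + 2q + m_{q+1}$ and $\diam(\trunk) \le |\trunk|-1$ forcing the trunk to be a path, and the same equality analysis of pendant-to-pendant distances pinning the $(q{+}1)$-twigs to distinct trunk endpoints (the paper tabulates these distances explicitly and reads off the placement, rather than arguing by contradiction, but the content is identical). The only real difference is that you spell out the forward direction's degree and length bookkeeping, which the paper dismisses as clear.
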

\begin{proof}
$(\Rightarrow)$ It is clear that every proper $q$-caterpillar that is not a path satisfies the above three conditions. \\ 
$(\Leftarrow)$ A tree satisfying $|\trunk|=1$ and $(ii)$ is indeed a proper $q$-caterpillar. Thus we may assume that $|\trunk| \geq 2$. Note that $\diam(T) \leq 2q+\diam(\trunk) + m_{q+1}  $ along with $(iii)$ implies that $(|\trunk|-1)\leq \diam(\trunk)  $, and hence $\trunk$ is a path, say $\langle w_1,w_2,\dots,w_{k} \rangle$ (with endpoints $w_1$ and $w_{k})$. From $(i)$, it follows that $\trunk$ consists only of vertices of degree at least $3$, owing to which every vertex of the trunk must be incident to at least one twig. To prove that $T$ is a proper $q$-caterpillar, it suffices to prove that twigs of length $q+1$ (if they exist) are incident to the distinct endpoints of the trunk. For $1 \le i \le k$, let $w_i$ be incident to $n_i$ many twigs $P_i^t$ ($1 \le t \le n_i$), and fix $0 \le r_i \le n_i$ where the length of the path $P_i^t$ is equal to $q$ if $r_i < t \le n_i$ and $q+1$ otherwise. Let $u_i^t$ be the pendant vertex of the twig $P_i^t$($1 \le t \le n_i$). In the resulting tree $T$, we have the following

    $$d(u_i^t, u_j^s) = \begin{cases}
        q + |i-j| + q 
        &\text{ if } r_i<t\le n_i \text{ and } r_j < s \le n_j,\\
        (q+1) + |i-j| + q 
        &\text{ if } 1 \le t \le r_i \text{ and } r_j < s \le n_j, \\
        q + |i-j| + (q+1)
        &\text{ if } r_i < t \le n_i \text{ and } 1 \le s \le r_j, \\
        (q+1) + |i-j| + (q+1)
        &\text{ if } 1 \le t \le r_i \text{ and } 1 \le s \le r_j.
    \end{cases}$$
From the above computation, the endpoints of the path in $T$ of length $\diam(T)$ must be $w_{1}^t$ and $w_{k}^s$ for some $1 \leq t \leq n_1$ and $1 \leq s \leq n_{k}$. This together with $(ii)$ and $(iii)$ dictates the position of $q{+}1$-twigs as follows:
$$m_{q+1} = \begin{cases}
    0 & \text{ iff } r_1 = r_k = 0, \\
    1 &\text{ iff exactly one of $r_1$ or $r_k$ is non-zero,}  \\
    2 &\text{ iff both $r_1$ and $r_k$ are non-zero.} 
\end{cases} $$
Therefore, the tree $T$ is a proper $q$-caterpillar, and this completes the proof. 
\end{proof}

\noindent\textbf{Note. }The trunk of the proper $q$-caterpillar may not coincide with the spine (see Figure \ref{fig:cattrunk}). However, it is always a subpath of the spine. 

Proposition \ref{prop: csf-q-caterpillars} yields that the chromatic symmetric function can distinguish proper $q$-caterpillars from other trees, as all the tree-invariants used in their characterization can be extracted from the chromatic symmetric function. 

Before proceeding to the proof of Theorem \ref{thm:proper-p-caterpillars}, we revisit the factorization of integer compositions introduced in \cite{thomas-composition}. This factorization is instrumental for determining the isomorphism classes of proper $q$-caterpillars.


\subsection{Monoid of Integer Compositions}{\label{subsec:monoidcomp}}

Let $n \in \P$ be a positive integer. An \emph{integer composition} $\alpha$ of $n$, denoted by  $\alpha \vDash n$, is an ordered sequence of positive integers $\alpha_1\,\alpha_2\,\cdots\,\alpha_r$ whose sum is $n$. We call each $\alpha_i$ as the $i^{\mathrm{th}}$ component of $\alpha$, and its length $\l(\alpha)$ is $r$. Let $\mathscr{C}$ denote the set of all integer compositions.

For any two compositions $\alpha = \alpha_1\,\alpha_2\,\cdots\,\alpha_{r}$ and $\beta = \compbeta{s}$, their \emph{concatenation} is given by
$$\alpha \cdot \beta := \compalpha{r}\;\compbeta{s},$$
whereas the near-concatenation operation is defined as
$$ \alpha \odot \beta := \alpha_1\,\alpha_2\,\cdots\,\alpha_{r-1}\, (\alpha_{r}+\beta_1) \, \beta_2 \, \cdots \,\beta_{s}.$$ Let $\alpha^{\odot q}$ denote $q$-fold near-concatenation $\underbrace{\alpha  \odot \alpha \odot \cdots \odot \alpha}_{q \text{ times}}$, for any positive integer $q$.
The \emph{composition} of two integer compositions is given by 
$$
\alpha \circ \beta := \beta^{\odot\alpha_1}\, \cdot\,\beta^{\odot\alpha_2}\cdot\,\cdots\,\cdot \beta^{\odot{\alpha_{r}}}.
$$
For example, $2\,1 \circ 2\,3 = (2\,3\,\odot 2\,3) \cdot 2\,3 = 2\,5\,3\,2\,3$.
\begin{propn}[{\cite[Proposition~3.3]{thomas-composition}}] $\left( \mathscr{C}, \circ \right)$ is a non-commutative monoid with the integer composition $1$ as the identity element.    
\end{propn}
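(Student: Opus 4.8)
The plan is to verify directly that $(\mathscr{C}, \circ)$ satisfies the three monoid axioms: closure (immediate, since concatenation and near-concatenation of compositions yield compositions), existence of a two-sided identity, and associativity. Closure requires no work beyond observing that $\beta^{\odot k}$ is a composition for every $k \geq 1$ and that concatenation of compositions is a composition, so $\alpha \circ \beta \in \mathscr{C}$ whenever $\alpha, \beta \in \mathscr{C}$.

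First I would check that the composition $1$ (the length-one composition whose single component is $1$) is a two-sided identity. For the right identity $\alpha \circ 1 = \alpha$, note that $1^{\odot \alpha_i} = 1\,1\,\cdots\,1$ with $\alpha_i$ ones is NOT what we want; rather $1^{\odot k}$ is the near-concatenation of $k$ copies of the composition $1$, and since $1 \odot 1 = (1{+}1) = 2$, one computes $1^{\odot k} = k$, a single-component composition. Hence $\alpha \circ 1 = 1^{\odot \alpha_1} \cdot \cdots \cdot 1^{\odot \alpha_r} = \alpha_1 \cdot \alpha_2 \cdots \alpha_r = \alpha$, using that concatenation of the singletons $\alpha_i$ rebuilds $\alpha$. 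For the left identity $1 \circ \beta = \beta$, since $1$ has a single component equal to $1$, the definition gives $1 \circ \beta = \beta^{\odot 1} = \beta$. Both identities are short, purely computational checks.

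The substantive step is associativity: $(\alpha \circ \beta) \circ \gamma = \alpha \circ (\beta \circ \gamma)$. The natural route is to identify $\mathscr{C}$ with a more tractable algebraic structure in which the composition operation becomes transparently associative. I would invoke the standard bijection between compositions of $n$ and subsets of $\{1, \dots, n{-}1\}$ (equivalently, binary words), under which near-concatenation and concatenation correspond to concatenation of words with or without a separating marker. Under this encoding, the operation $\circ$ should correspond to a substitution/plethysm-like operation on words whose associativity is inherited from function composition or from the associativity of word concatenation. Alternatively, one can give a direct combinatorial proof by writing out both sides as iterated concatenations of blocks of the form $\gamma^{\odot m}$ and checking that the multiset of exponents and their grouping agree; the key algebraic identities to isolate and prove first are the distributivity laws $(\alpha \cdot \beta) \circ \gamma = (\alpha \circ \gamma) \cdot (\beta \circ \gamma)$ and $(\alpha \odot \beta) \circ \gamma = (\alpha \circ \gamma) \odot (\beta \circ \gamma)$, together with $\beta^{\odot(k+m)} = \beta^{\odot k} \odot \beta^{\odot m}$, since these reduce the associativity of $\circ$ to the already-known associativity of $\cdot$ and $\odot$.

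The main obstacle I anticipate is bookkeeping in the associativity verification, specifically the interaction between the near-concatenation $\odot$ (which merges boundary components by addition) and the outer composition: when one expands $(\alpha \circ \beta) \circ \gamma$, the $\odot$-joints created in forming $\alpha \circ \beta$ must be shown to distribute correctly over $\gamma$, and it is precisely the distributivity identity for $\odot$ over $\circ$ that controls this. I would therefore establish the two distributive laws and the exponent-additivity law as preliminary lemmas, since once these are in hand the associativity of $\circ$ follows by a routine induction on $\l(\alpha)$ (peeling off one component of $\alpha$ at a time and rewriting $\alpha = \alpha' \cdot \alpha_r$ or $\alpha = \alpha' \odot \alpha_r$), reducing everything to the associativity of concatenation and near-concatenation, which is elementary.
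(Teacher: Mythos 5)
The paper does not actually prove this proposition; it is imported wholesale from Billera--Thomas--van Willigenburg \cite{thomas-composition}, so there is no internal proof to compare against, and your proposal should be judged as a self-contained verification --- which it essentially is, and correctly so. Your identity checks are right, including the genuinely subtle point that $1^{\odot k}$ is the single-part composition $k$ rather than $1\,1\cdots 1$, giving $\alpha \circ 1 = \alpha_1 \cdot \alpha_2 \cdots \alpha_r = \alpha$ and $1 \circ \beta = \beta^{\odot 1} = \beta$. Your associativity plan also goes through: $(\alpha \cdot \beta)\circ\gamma = (\alpha\circ\gamma)\cdot(\beta\circ\gamma)$ is immediate from the definition of $\circ$, and $(\alpha \odot \beta)\circ\gamma = (\alpha\circ\gamma)\odot(\beta\circ\gamma)$ follows from the exponent-additivity law $\gamma^{\odot(\alpha_r+\beta_1)} = \gamma^{\odot \alpha_r} \odot \gamma^{\odot \beta_1}$ together with the elementary observation that $\odot$ touches only the boundary parts of a concatenation, i.e.\ $(A\cdot B)\odot C = A\cdot(B\odot C)$; an induction on $\l(\alpha)$ (peeling off a part by $\cdot$ to shorten $\alpha$, and by $\odot$ to handle a single part of size $a>1$ via $\beta^{\odot a} = \beta^{\odot(a-1)}\odot\beta$) then reduces everything to the associativity of $\cdot$ and $\odot$, exactly as you say. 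Two repairs are needed to make this a complete proof of the statement as written. First, the statement asserts the monoid is \emph{non-commutative}, so you must exhibit a non-commuting pair; the paper's own example supplies one, since $2\,1 \circ 2\,3 = 2\,5\,3\,2\,3$ while $2\,3 \circ 2\,1 = 2\,3\,1\,2\,3\,3\,1$. Second, drop (or make precise) the hedge about encoding compositions as binary words and inheriting associativity from a ``substitution/plethysm-like operation'': as phrased, with ``should correspond,'' it is not an argument, whereas your second, direct route via the two distributivity laws is complete --- commit to that one.
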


Let $\alpha$ and $\beta$ be two integer compositions of $n$. Then $\alpha$ is said to be a \emph{refinement} of $\beta$ if $\alpha$ is obtained by subdividing some (or no) parts of $\beta$, denoted by $\alpha \preceq \beta $. In this case, we also say that $\beta$ is a \emph{coarsening} of $\alpha$. For example, $2\,3\,1\,3\,2 \preceq 2\,4\,5$. Let $\left(\mathscr{C},\preceq \right)$ be the poset with the refinement order. In \cite{thomas-composition}, Billera, Thomas and Willigenburg defined an equivalence relation on $\mathscr{C}$ based on the refinement of the integer compositions. We consider the polynomial interpretation of that equivalence relation called the $\mathcal{L}$-polynomial \cite{AlistePrieto2014, thomas-composition}. 

The $\mathcal{L}$-polynomial of an integer composition $\alpha$ is defined as
\begin{equation*} 
    \mathcal{L}(\rbx;\alpha) = \sum_{\beta \succeq \alpha} x_{\beta_1}x_{\beta_2}\dots x_{\beta_{r}}.
\end{equation*}
For instance, the $\mathcal{L}$-polynomial of the composition $2\,2\,1\,2$ is $x_1x_2^3 + x_1x_2x_4 + 2x_2^2x_3+ 2x_2x_5 +x_3x_4+x_7$.
Note that the equality of the $\mathcal{L}$-polynomial induces an equivalence relation on the integer compositions. Let $[\alpha]_{\mathcal{L}}$ denote the equivalence class of $\alpha$ under this equivalence relation. We recall its description using the unique factorization in $\left( \mathscr{C}, \circ \right)$~\cite{thomas-composition}.

A factorization $\alpha = \varepsilon \circ \eta$ is said to be \emph{trivial} if one of the following is satisfied:
\begin{enumerate}[label=\alph*)]
    \item either $\varepsilon$ or $\eta$ is an identity composition $1$,
    \item both $\varepsilon$ and $\eta$ are of length $1$,
    \item both $\varepsilon$ and $\eta$ have all parts equal to $1$.
\end{enumerate}

An integer composition is said to be \emph{irreducible} if it admits only trivial factorization. A factorization $\alpha = \eta_1 \circ \eta_2 \circ \cdots \circ \eta_{k}$ is said to be an \emph{irreducible factorization} if each integer composition $\eta_i$ is irreducible and no $\eta_i\circ \eta_{i+1}$ is a trivial factorization.

\begin{thrm}[{\cite[Theorem 3.6]{thomas-composition}}]
    Every integer composition admits a unique irreducible factorization.
\end{thrm}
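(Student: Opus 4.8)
The plan is to prove existence and uniqueness separately, using the \emph{size} $|\alpha| := \sum_i \alpha_i$ (so that $\alpha \vDash |\alpha|$) as the basic inductive parameter. The starting observation is that size is multiplicative: a direct count of parts gives $|\alpha \circ \beta| = |\alpha|\,|\beta|$ (and $\l(\alpha\circ\beta) = (\l(\beta)-1)|\alpha|+\l(\alpha)$), so $|\cdot|\colon(\mathscr{C},\circ)\to(\P,\times)$ is a monoid homomorphism. Since the identity $1$ is the unique composition of size $1$, any \emph{non-trivial} factorization $\alpha=\varepsilon\circ\eta$ must have $|\varepsilon|,|\eta|\ge 2$, hence $|\varepsilon|,|\eta|<|\alpha|$. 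Existence of \emph{some} irreducible factorization then follows by strong induction on $|\alpha|$: factor $\alpha$ non-trivially, recurse on the two strictly smaller factors, and concatenate the resulting lists. To enforce the ``no $\eta_i\circ\eta_{i+1}$ trivial'' clause I would add a normalization step, using that the only way two irreducibles can compose trivially is when both are single parts $(a),(b)$ (giving $(ab)$) or both are all-ones $1^a,1^b$ (giving $1^{ab}$); in either case their product is again an irreducible of the same type, so I repeatedly merge such adjacent pairs. Each merge lowers the number of factors, so the process terminates in a genuine irreducible factorization.

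For uniqueness the key tool I would develop is a self-similarity law for cut-sets. Encode each composition $\gamma\vDash N$ by its cut-set $D(\gamma)=\{\gamma_1,\gamma_1+\gamma_2,\dots\}\subseteq\{1,\dots,N-1\}$ of partial sums (so refinement is reverse inclusion). Unwinding the definitions of $\odot$ and $\cdot$ yields
\[
D(\alpha\circ\beta) \;=\; \{\, j|\beta| : j\in D(\alpha)\,\} \;\cup\; \bigcup_{j=0}^{|\alpha|-1}\bigl(j|\beta| + D(\beta)\bigr),
\]
i.e.\ in ``base $|\beta|$'' the pattern inside every open block $(j|\beta|,(j+1)|\beta|)$ is a verbatim copy of $D(\beta)$, while the cuts at exact multiples of $|\beta|$ are governed by $\alpha$. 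Two consequences I would extract immediately: reading $D(\alpha\circ\beta)$ at multiples of $|\beta|$ recovers $D(\alpha)$, and reading the $j=0$ block recovers $D(\beta)$, so $(\mathscr{C},\circ)$ is both left- and right-cancellative.

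With cancellation in hand, the plan for uniqueness is to peel off the rightmost irreducible factor and induct on $|\alpha|$. Given $\eta_1\circ\cdots\circ\eta_k=\theta_1\circ\cdots\circ\theta_l$, I would argue $\eta_k=\theta_l$, then right-cancel and apply the inductive hypothesis to $\eta_1\circ\cdots\circ\eta_{k-1}=\theta_1\circ\cdots\circ\theta_{l-1}$ (a sub-factorization of an irreducible factorization is again one). The rightmost factor is characterized as the \emph{smallest valid right divisor} of $\alpha$: if $\beta^\ast$ is a right divisor of least size $>1$, then minimality forces $\beta^\ast$ irreducible (any further non-trivial right factorization would produce a still smaller right divisor), and this smallest divisor is read off intrinsically from $D(\alpha)$, independent of the chosen factorization.

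The hard part will be the two degenerate families, and this is where I expect the main obstacle. When $\eta_k=(b)$ is a single part we have $D(\eta_k)=\emptyset$, so the open-block pattern carries no information and $(m)$ turns out to be a valid right divisor for \emph{every} $m\mid b$; dually, when $\eta_k=1^b$ every interior position is cut. In these cases the naive ``smallest right divisor'' is $(p)$ resp.\ $1^p$ for a prime $p\mid b$, not $\eta_k$ itself, because $(b)=(p)\circ(b/p)$ is a \emph{trivial} factorization that must not be performed. The resolution I would pursue is to read off instead the \emph{maximal} single-part (resp.\ all-ones) right divisor, which I expect to equal $\gcd\bigl(\{|\alpha|\}\cup D(\alpha)\bigr)$ in the single-part case (and a dual length-based quantity in the all-ones case), an intrinsic invariant. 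The ``no adjacent trivial factor'' condition is exactly what makes this maximal degenerate block the correct factor: it guarantees a single part is never adjacent to a single part nor an all-ones to an all-ones, so each such factor is genuinely maximal, and a short argument (using the inductive uniqueness for $\eta_1\circ\cdots\circ\eta_{k-1}$, whose rightmost factor is then forced to be non-degenerate) rules out over- or under-counting. Verifying that these degenerate runs interact correctly with the self-similarity law and with cancellation is the principal technical burden; once that bookkeeping is settled, the peeling induction closes.
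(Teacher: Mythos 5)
This theorem is not proved in the paper at all: it is imported verbatim from Billera--Thomas--van Willigenburg \cite{thomas-composition}, so your proposal can only be judged against what a complete proof requires rather than against an in-paper argument. Much of your outline is genuinely correct: size multiplicativity of $\circ$, the existence argument by strong induction with the merging normalization (adjacent single parts merge to a single part, adjacent all-ones to an all-ones, and both families are irreducible), the cut-set self-similarity law for $D(\alpha\circ\beta)$, the resulting left- and right-cancellativity, and the formula $\gcd\left(\{|\alpha|\}\cup D(\alpha)\right)$ for the maximal single-part right divisor are all sound.

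The genuine gap is the step ``the rightmost factor is characterized as the smallest valid right divisor.'' What you actually justify there is that the smallest right divisor $\beta^{\ast}$ of size $>1$ is irreducible and intrinsically defined --- both true, but not the needed claim, which is that $\beta^{\ast}$ equals the last factor $\eta_k$ of an \emph{arbitrary} irreducible factorization. From $\alpha=A\circ\beta^{\ast}=(\eta_1\circ\cdots\circ\eta_{k-1})\circ\eta_k$ you must exclude that $\beta^{\ast}$ and $\eta_k$ are incomparable right divisors, say of sizes $p\le q$ with $p\nmid q$; your self-similarity law describes one factorization at a time and says nothing about this. What is missing is a two-period interaction lemma of Fine--Wilf type: if $p\mid q$ then the size-$p$ right divisor right-divides the size-$q$ one (this does follow from your law), while if $p\nmid q$ a CRT-style argument on the two periodicities (each holding only off multiples of $p$, resp.\ $q$) forces the cut set below $\mathrm{lcm}(p,q)$ to be fully periodic modulo $\gcd(p,q)$, which forces both divisors to be single parts or both to be all-ones compositions. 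This lemma is the heart of the theorem --- it is precisely the statement that the two degenerate families are the \emph{only} obstruction to freeness --- and your degenerate patch silently relies on it as well: to know that the rightmost factor is a single part at all (rather than some non-degenerate irreducible) once a single-part right divisor exists, you already need this dichotomy. Without it, the peeling induction cannot begin, even in the ``easy'' non-degenerate case. A secondary slip: when $\eta_k$ is a single part, the no-trivial-adjacency condition only forbids $\eta_{k-1}$ from being a single part; it may perfectly well be all-ones (e.g.\ $1^a\circ(b)$ is a legitimate irreducible factorization), so your parenthetical claim that the rightmost factor of $\eta_1\circ\cdots\circ\eta_{k-1}$ is ``forced to be non-degenerate'' is false as stated, though the argument only needs it to be of a different degenerate type.
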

For an integer composition $\alpha$, let $\alpha^{*}$ be the integer composition obtained by reversing $\alpha$, that is, the $i^{\mathrm{th}}$ component of $\alpha^{*}$ is $\alpha_{\l(\alpha)-i+1}$ for $1 \leq i \leq \l(\alpha)$.
\begin{thrm}[{\cite[Theorem 4.1]{thomas-composition}}]{\label{thm:factorize}}
    Let $\alpha = \eta_1 \circ \eta_2 \circ \cdots \circ \eta_{k}$ be the irreducible factorization of $\alpha$. Then
    \begin{equation*}
        [\alpha]_{\mathcal{L}} = \left\{\varepsilon_1 \circ \varepsilon_2 \circ \cdots \circ \varepsilon_{k} ~ \mid ~  \varepsilon_i = \eta_i \text{ or } \varepsilon_i={\eta_i}^*,\text{ for all }i=1,2,\dots k \right\},
    \end{equation*}
\end{thrm}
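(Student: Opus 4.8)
The plan is to prove the two inclusions of the claimed set equality separately, relying on two structural facts about $\mathcal{L}$: its invariance under reversal, and a multiplicativity formula describing its behaviour under the composition $\circ$. Throughout I would use the partial-sum encoding: a composition $\gamma \vDash N$ is recorded by $S(\gamma)=\{\gamma_1,\gamma_1+\gamma_2,\dots\}\subseteq\{1,\dots,N-1\}$, coarsenings $\beta\succeq\gamma$ correspond bijectively to subsets $S(\beta)\subseteq S(\gamma)$, and $\mathcal{L}(\rbx;\gamma)=\sum_{\beta\succeq\gamma}x_{\beta_1}\cdots x_{\beta_{\ell(\beta)}}$ simply records, for each multiset of part sizes, how many coarsenings of $\gamma$ realise it.

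First I would record the reversal invariance: since $\beta\succeq\gamma$ if and only if $\beta^{*}\succeq\gamma^{*}$, and since $x_{\beta_1}\cdots x_{\beta_{\ell}}$ is unchanged upon reversing $\beta$ (the variables commute), one gets $\mathcal{L}(\rbx;\gamma)=\mathcal{L}(\rbx;\gamma^{*})$ for every $\gamma$. Next I would establish the multiplicativity formula. Writing $n=|\alpha|$ and $m=|\beta|$, a direct partial-sum computation gives
\[
S(\alpha\circ\beta)=\big(m\cdot S(\alpha)\big)\ \cup\ \big\{\,pm+b\ :\ 0\le p\le n-1,\ b\in S(\beta)\,\big\},
\]
where the first set is the ``hard'' block boundaries and the second the interior boundaries of the $n$ successive copies of $\beta$. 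Decomposing a coarsening of $\alpha\circ\beta$ into the block boundaries it retains (a coarsening $a\succeq\alpha$) and its restriction to the resulting blocks (the copies inside a block being joined by near-concatenation, so block $j$ contributes a coarsening of $\beta^{\odot a_j}$) yields
\[
\mathcal{L}(\rbx;\alpha\circ\beta)=\sum_{a\succeq\alpha}\ \prod_{j=1}^{\ell(a)}\mathcal{L}\!\left(\rbx;\beta^{\odot a_j}\right).
\]
Because the summand depends on $a$ only through its multiset of parts, the right-hand side depends on $\alpha$ only through $\mathcal{L}(\rbx;\alpha)$; hence $\mathcal{L}(\rbx;\alpha)=\mathcal{L}(\rbx;\alpha')$ forces $\mathcal{L}(\rbx;\alpha\circ\beta)=\mathcal{L}(\rbx;\alpha'\circ\beta)$ (\emph{left cancellation}). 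Moreover $(\beta^{\odot k})^{*}=(\beta^{*})^{\odot k}$, so reversal invariance gives $\mathcal{L}(\rbx;\beta^{\odot k})=\mathcal{L}(\rbx;(\beta^{*})^{\odot k})$, and every factor of the product is unchanged on replacing $\beta$ by $\beta^{*}$; thus $\mathcal{L}(\rbx;\alpha\circ\beta)=\mathcal{L}(\rbx;\alpha\circ\beta^{*})$ (\emph{last-factor reversal}).

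These two consequences give the inclusion $\supseteq$ at once. To replace a single factor $\eta_i$ by $\eta_i^{*}$ in $\alpha=\eta_1\circ\cdots\circ\eta_k$, I would set $A=\eta_1\circ\cdots\circ\eta_i$ and $Y=\eta_{i+1}\circ\cdots\circ\eta_k$. Last-factor reversal applied to $A=(\eta_1\circ\cdots\circ\eta_{i-1})\circ\eta_i$ gives $\mathcal{L}(\rbx;A)=\mathcal{L}(\rbx;\eta_1\circ\cdots\circ\eta_{i-1}\circ\eta_i^{*})$, and left cancellation (with right factor $Y$) then yields $\mathcal{L}(\rbx;\alpha)=\mathcal{L}(\rbx;\eta_1\circ\cdots\circ\eta_i^{*}\circ\cdots\circ\eta_k)$. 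Composing such single swaps shows that every element of the right-hand set lies in $[\alpha]_{\mathcal{L}}$.

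The hard part is the reverse inclusion $[\alpha]_{\mathcal{L}}\subseteq\{\varepsilon_1\circ\cdots\circ\varepsilon_k\}$: that $\mathcal{L}(\rbx;\gamma)=\mathcal{L}(\rbx;\alpha)$ forces $\gamma$ to be a factor-reversal of $\alpha$. I would argue by induction on $N=|\alpha|$ (which $\mathcal{L}$ determines as the common weight of its monomials), peeling off one irreducible factor at a time. The main obstacle is a cancellation lemma: from $\mathcal{L}(\rbx;\alpha)$ alone one must recover the last irreducible factor $\eta_k$ up to reversal together with the $\mathcal{L}$-class of the cofactor $\eta_1\circ\cdots\circ\eta_{k-1}$, after which the induction hypothesis and the uniqueness of irreducible factorization (\cite[Theorem 3.6]{thomas-composition}) close the argument. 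Writing $\alpha=A\circ\eta_k$ with $A=\eta_1\circ\cdots\circ\eta_{k-1}$, the multiplicativity formula expresses $\mathcal{L}(\rbx;\alpha)$ entirely through the polynomials $\mathcal{L}(\rbx;\eta_k^{\odot t})$, so the combinatorics of $\eta_k$ is visible in how the monomials of $\mathcal{L}(\rbx;\alpha)$ factor; I would try to extract it by analysing the extreme (longest, respectively coarsest) monomials. Turning this visibility into a well-defined extraction—proving that $\circ$ is cancellative at the level of $\mathcal{L}$ and that an irreducible composition admits no $\mathcal{L}$-coincidence beyond its own reversal—is the delicate step where irreducibility of the $\eta_i$ must genuinely be used, and is where I expect the bulk of the effort to lie.
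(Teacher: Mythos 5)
This statement is not proved in the paper at all: it is imported verbatim as \cite[Theorem 4.1]{thomas-composition} (Billera--Thomas--van Willigenburg), so there is no internal proof to compare against; your attempt must be judged as a proof of the cited result itself. The first half of your proposal is sound: the partial-sum encoding, the reversal invariance $\mathcal{L}(\rbx;\gamma)=\mathcal{L}(\rbx;\gamma^{*})$, the formula $\mathcal{L}(\rbx;\alpha\circ\beta)=\sum_{a\succeq\alpha}\prod_{j}\mathcal{L}\big(\rbx;\beta^{\odot a_j}\big)$, the observation that this depends on $\alpha$ only through $\mathcal{L}(\rbx;\alpha)$, and the deduction (via associativity of $\circ$) that any single factor $\eta_i$ may be replaced by $\eta_i^{*}$ without changing $\mathcal{L}$ are all correct. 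This establishes the inclusion $\supseteq$, i.e.\ that every factor-reversal of $\alpha$ lies in $[\alpha]_{\mathcal{L}}$.

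The genuine gap is that the inclusion $\subseteq$ --- which is the entire substance of the theorem --- is not proved but only planned. The two claims you defer, namely (a) that $\circ$ is cancellative at the level of $\mathcal{L}$ (if $\mathcal{L}(\rbx;A\circ\eta)=\mathcal{L}(\rbx;B\circ\eta)$ then $\mathcal{L}(\rbx;A)=\mathcal{L}(\rbx;B)$, and that $\eta$ itself can be recovered up to reversal), and (b) that an irreducible composition is $\mathcal{L}$-equivalent only to itself and its reverse, are not routine consequences of your multiplicativity formula; they are exactly the hard theorems that occupy the bulk of the Billera--Thomas--van Willigenburg paper, where they are obtained through a detailed analysis of how equivalence interacts with the monoid structure and with the uniqueness of irreducible factorization (their Theorem 3.6). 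Your proposed extraction of $\eta_k$ from ``extreme monomials'' of $\mathcal{L}(\rbx;\alpha)$ is only a heuristic: the monomials of $\mathcal{L}(\rbx;\alpha)$ record multisets of part sizes of coarsenings, with multiplicities, and nothing in the proposal shows these data pin down the last irreducible factor up to reversal, nor that the cofactor's $\mathcal{L}$-class is then determined. As written, the proposal proves only that the right-hand set is contained in $[\alpha]_{\mathcal{L}}$, not the claimed equality.
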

\noindent \textbf{Example. } Consider the integer composition $4\;10\;4\;10 $ with its irreducible factorization given by $1\;1 \circ 2\;5 \circ 2$. Then the equivalence class 
$$
[4\;10\;4\;10]_{\mathcal{L}} = \{1\;1 \circ 2\;5 \circ 2 , 1\;1 \circ 5\;2 \circ 2\} = \{4\;10\;4\;10,10\;4\;10\;4\}.
$$
\subsection{Proof of Theorem \ref{thm:proper-p-caterpillars}}{\label{subsec:proofcat}}

We have already established in Section \ref{subsec:characterize-proper-cat} that the chromatic symmetric function determines proper $q$-caterpillars. Our objective now is to demonstrate that the chromatic symmetric function distinguishes non-isomorphic proper $q$-caterpillars from one another. To accomplish this, we associate every proper $q$-caterpillar with a unique integer composition such that any two proper $q$-caterpillars are isomorphic if and only if their corresponding compositions are either the same or reverses of one another. The proof technique is similar to the proof of distinguishing proper $1$-caterpillars \cite{AlistePrieto2014}.

Let $q \in \{2,3,4,\dots\}$, and $T$ be a proper $q$-caterpillar. Let $\langle v_1, v_2, \dots v_{\l} \rangle$ denote the spine of $T$. Let $p_i$ represent the number of paths in $T$ of length $q$, starting from a leaf and ending at $v_i$. We define a composition $\varphi(T)$ of length $\l$ whose $i^{\mathrm{th}}$ component is $q {\cdot} p_i +1$.  Conversely, for any integer composition $\alpha$ with all components greater than one and congruent to $1$ modulo $q$, we construct a proper $q$-caterpillar $\tau(\alpha)$ as follows: consider a path with $\l(\alpha)$ vertices, which serves as the spine, and glue $\tfrac{\alpha_i-1}{q}$ new paths of length $q$ to the $i^{\mathrm{th}}$ vertex of the spine. The mapping $\varphi$ and $\tau$  are inverses of each other. For instance, the proper $q$-caterpillar in Figure \ref{fig:cattrunk} corresponds to the integer composition $q{+}1\;\;2q{+}1\;\;q{+}1\;\;3q{+}1\;\;2q{+}1$.
\begin{rem}{\label{rem:comp-cat}}
    Any two proper $q$-caterpillars $S$ and $T$ are isomorphic if and only if $\varphi(S) = \varphi(T)$ or $\varphi(S) = \varphi(T)^*$.
\end{rem} 

The following lemma, which is a generalization of \cite[Proposition 2.5]{AlistePrieto2014}, states that $\mathcal{L}$-polynomial of the compositions associated to proper $q$-caterpillars can be obtained as an evaluation of the $\U$-polynomial. 

\begin{lemma}{\label{lem:u-poly-R-poly equiv}}
	Let $q\geq 1$. For any proper $q$-caterpillar $T=(V,E)$ and the composition $\varphi(T)$ associated to $T$, we have 
	$$\U_T(\underbrace{0,0,\dots,0}_{q \text{ times} },x_{q+1},x_{q+2},\dots) = \mathcal{L}(\varphi(T);\rbx).$$
\end{lemma}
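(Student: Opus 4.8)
The plan is to use that, since $T$ is a tree, every $F\subseteq E$ satisfies $|F|-|V|+\Kappa(F)=0$, so the $\U$-polynomial loses its dependence on $y$ and reduces to $\U_T(\rbx)=\sum_{F\subseteq E}\rbx_{\lambda[F]}$, where $\rbx_{\lambda[F]}=\prod_{C}x_{|C|}$ is the product over the connected components $C$ of the spanning subgraph $T[F]$, indexed by their orders. Substituting $x_1=\dots=x_q=0$ annihilates exactly those terms indexed by an $F$ for which $T[F]$ has at least one component of order at most $q$. Hence the left-hand side equals the sum of $\prod_C x_{|C|}$ taken only over those $F$ all of whose components have order at least $q+1$. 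The goal is then to match these surviving subsets $F$ with the coarsenings of $\varphi(T)$ and to verify that the corresponding monomials agree.

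The decisive step, which I expect to be the main obstacle, is to show that $F$ survives the substitution if and only if $F$ contains every edge lying on a glued length-$q$ path. First I would fix one such pendant path attached to $v_i$, written $v_i=u_0,u_1,\dots,u_q$ with leaf $u_q$, and observe that if any edge $u_{j-1}u_j$ is absent from $F$ then, $T$ being a tree, the vertices $u_j,\dots,u_q$ are severed from the rest of $T[F]$; the component containing the leaf $u_q$ therefore sits inside $\{u_j,\dots,u_q\}$ and has order at most $q-j+1\le q$, so its term dies. This forces every pendant-path edge into any surviving $F$. Conversely, once all pendant-path edges are present, each spine vertex $v_i$ together with its $p_i$ attached paths forms a connected bundle of order $qp_i+1$, which is the $i^{\mathrm{th}}$ component of $\varphi(T)$ and is at least $q+1$ because $p_i\in\P$; so no small component can arise from the glued paths, and the only remaining freedom lies in the spine edges.

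It then remains to run the bijection. The surviving subsets $F$ are precisely the $2^{\l-1}$ subsets of the spine edges $\{v_iv_{i+1}\}_{1\le i<\l}$, all pendant-path edges being forced. Deleting or keeping spine edges splits the spine $\langle v_1,\dots,v_\l\rangle$ into maximal runs of consecutive vertices, and the component of $T[F]$ over a run $v_a,\dots,v_b$ has order $\sum_{i=a}^{b}(qp_i+1)=\sum_{i=a}^{b}\varphi(T)_i$. These runs are exactly the coarsenings $\beta\succeq\varphi(T)$, under the dictionary ``spine edge $v_iv_{i+1}$ present $\iff$ the parts $i$ and $i+1$ of $\varphi(T)$ are merged in $\beta$'', and the order of each run equals the corresponding part $\beta_j$. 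Consequently $\prod_C x_{|C|}=\prod_j x_{\beta_j}$ for the $F$ associated to $\beta$, and summing over all subsets gives
\[
\U_T(\underbrace{0,\dots,0}_{q},x_{q+1},x_{q+2},\dots)=\sum_{\beta\succeq\varphi(T)}\prod_j x_{\beta_j}=\mathcal{L}(\varphi(T);\rbx),
\]
which is the claimed identity.
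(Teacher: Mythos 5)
Your proof is correct and takes essentially the same route as the paper's: use the tree identity $|F|-|V|+\Kappa(F)=0$ to drop the $y$-dependence, observe that setting $x_1=\cdots=x_q=0$ kills exactly those $F$ with a component of order at most $q$ and hence forces all pendant-path (non-spine) edges into every surviving $F$, and then identify the remaining spine-edge subsets with the coarsenings of $\varphi(T)$. You merely spell out in more detail the forcing argument and the monomial-preserving bijection that the paper states more tersely.
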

\begin{proof}
	The $\U$-polynomial with $x_1=x_2=\cdots=x_q=0$ can be interpreted as the subset-sum over $F \subseteq E$ such that each connected component of the induced subgraph $T[F]$ has order at least $q+1$.
	This implies that such an $F$ must contain all non-spine edges (otherwise, the induced subgraph $T[F]$ would contain a connected component of order at most $q$). Thus every monomial in $\U_T(0,0,\dots,0,x_{q+1},x_{q+2},\dots)$ corresponds to a unique subset of the spine-edges of $T$ (see Table \ref{tab:subgraph--monomials}). These subsets uniquely determine the coarsenings of the composition $\varphi(T)$ in the poset $\left(\mathscr{C},\preceq\right)$. Therefore we have 
	$$\U_T(0,0,\dots,0,x_{q+1},x_{q+2},\dots) = \sum_{\substack{F \subseteq E \\
	F\text{ contains all}\\ \text{non-spine edges}}} \rbx_{\lambda[F]} = \mathcal{L}(\varphi(T);\rbx).$$
\end{proof}
\begin{table}[]
    \centering
    \begin{NiceTabular}{|m{0.12\linewidth}|>{\centering\arraybackslash}m{0.15\linewidth}|>{\centering\arraybackslash}m{0.15\textwidth}|>{\centering\arraybackslash}m{0.15\linewidth}|>{\centering\arraybackslash}m{0.15\linewidth}|}
    \hline\\%
    Subgraphs
    &
    \begin{tikzpicture}[scale =0.5,vertex/.style={draw,circle,fill=black,inner sep=2pt, scale =0.5}]
        \foreach \x in {0,1,2}
            {
            \node[vertex] (v\x) at (2*\x,0){};
            }

        \node[vertex] (u11) at (0,0.8) {};  
        \node[vertex] (u12) at (0,1.6) {};
        \node[vertex] (u13) at (0,2.4) {};
        \draw[black,line width=0.5pt] (v0) -- (u11);
        \node[vertex,shift={(2,0)}] (u21) at (0,0.8) {}; 
        \node[vertex,shift={(2,0)}] (u22) at (0,1.6) {}; 
        \node[vertex,shift={(2,0)}] (u23) at (0,2.4) {};
        \draw[black,line width=0.5pt] (v1) -- (u21);  
        \node[vertex,shift={(4,0)}] (u31) at (-0.2,0.8) {};
        \node[vertex,shift={(4,0)}] (u32) at (-0.4,1.6) {};
        \node[vertex,shift={(4,0)}] (u41) at (0.2,0.8) {};    
        \node[vertex,shift={(4,0)}] (u42) at (0.4,1.6) {}; 
        \node[vertex,shift={(4,0)}] (u33) at (-0.6,2.4) {};
        \node[vertex,shift={(4,0)}] (u43) at (0.6,2.4) {};
        \foreach \x in {3,4} {\draw[black,line width=0.5pt] (v2) -- (u\x1) ;}
        
        \foreach \x in {1,2,3,4}
        {\draw[black,line width = 0.5pt] (u\x1) -- (u\x2) -- (u\x3);}
    
    \end{tikzpicture}
    &  
    \begin{tikzpicture}[scale =0.5,vertex/.style={draw,circle,fill=black,inner sep=2pt, scale =0.5}]
        \foreach \x in {0,1,2}
            {
            \node[vertex] (v\x) at (2*\x,0){};
            }
        \foreach \x in {0}
            {
            \draw[black,line width=0.5pt] (2*\x,0) -- (2*\x+2,0);
            }

        \node[vertex] (u11) at (0,0.8) {};  
        \node[vertex] (u12) at (0,1.6) {};
        \node[vertex] (u13) at (0,2.4) {};
        \draw[black,line width=0.5pt] (v0) -- (u11);
        \node[vertex,shift={(2,0)}] (u21) at (0,0.8) {}; 
        \node[vertex,shift={(2,0)}] (u22) at (0,1.6) {}; 
        \node[vertex,shift={(2,0)}] (u23) at (0,2.4) {};
        \draw[black,line width=0.5pt] (v1) -- (u21);  
        \node[vertex,shift={(4,0)}] (u31) at (-0.2,0.8) {};
        \node[vertex,shift={(4,0)}] (u32) at (-0.4,1.6) {};
        \node[vertex,shift={(4,0)}] (u41) at (0.2,0.8) {};    
        \node[vertex,shift={(4,0)}] (u42) at (0.4,1.6) {}; 
        \node[vertex,shift={(4,0)}] (u33) at (-0.6,2.4) {};
        \node[vertex,shift={(4,0)}] (u43) at (0.6,2.4) {};
        \foreach \x in {3,4} {\draw[black,line width=0.5pt] (v2) -- (u\x1) ;}
        
        \foreach \x in {1,2,3,4}
        {\draw[black,line width = 0.5pt] (u\x1) -- (u\x2) -- (u\x3);}
    \end{tikzpicture}
    &
    \begin{tikzpicture}[scale =0.5,vertex/.style={draw,circle,fill=black,inner sep=2pt, scale =0.5}]
        \foreach \x in {0,1,2}
            {
            \node[vertex] (v\x) at (2*\x,0){};
            }
        \foreach \x in {1}
            {
            \draw[black,line width=0.5pt] (2*\x,0) -- (2*\x+2,0);
            }

        \node[vertex] (u11) at (0,0.8) {};  
        \node[vertex] (u12) at (0,1.6) {};
        \node[vertex] (u13) at (0,2.4) {};
        \draw[black,line width=0.5pt] (v0) -- (u11);
        \node[vertex,shift={(2,0)}] (u21) at (0,0.8) {}; 
        \node[vertex,shift={(2,0)}] (u22) at (0,1.6) {}; 
        \node[vertex,shift={(2,0)}] (u23) at (0,2.4) {};
        \draw[black,line width=0.5pt] (v1) -- (u21);  
        \node[vertex,shift={(4,0)}] (u31) at (-0.2,0.8) {};
        \node[vertex,shift={(4,0)}] (u32) at (-0.4,1.6) {};
        \node[vertex,shift={(4,0)}] (u41) at (0.2,0.8) {};    
        \node[vertex,shift={(4,0)}] (u42) at (0.4,1.6) {}; 
        \node[vertex,shift={(4,0)}] (u33) at (-0.6,2.4) {};
        \node[vertex,shift={(4,0)}] (u43) at (0.6,2.4) {};
        \foreach \x in {3,4} {\draw[black,line width=0.5pt] (v2) -- (u\x1) ;}
        
        \foreach \x in {1,2,3,4}
        {\draw[black,line width = 0.5pt] (u\x1) -- (u\x2) -- (u\x3);}
    \end{tikzpicture}
    &
    \begin{tikzpicture}[scale =0.5,vertex/.style={draw,circle,fill=black,inner sep=2pt, scale =0.5}]
        \foreach \x in {0,1,2}
            {
            \node[vertex] (v\x) at (2*\x,0){};
            }
        \foreach \x in {0,1}
            {
            \draw[black,line width=0.5pt] (2*\x,0) -- (2*\x+2,0);
            }

        \node[vertex] (u11) at (0,0.8) {};  
        \node[vertex] (u12) at (0,1.6) {};
        \node[vertex] (u13) at (0,2.4) {};
        \draw[black,line width=0.5pt] (v0) -- (u11);
        \node[vertex,shift={(2,0)}] (u21) at (0,0.8) {}; 
        \node[vertex,shift={(2,0)}] (u22) at (0,1.6) {}; 
        \node[vertex,shift={(2,0)}] (u23) at (0,2.4) {};
        \draw[black,line width=0.5pt] (v1) -- (u21);  
        \node[vertex,shift={(4,0)}] (u31) at (-0.2,0.8) {};
        \node[vertex,shift={(4,0)}] (u32) at (-0.4,1.6) {};
        \node[vertex,shift={(4,0)}] (u41) at (0.2,0.8) {};    
        \node[vertex,shift={(4,0)}] (u42) at (0.4,1.6) {}; 
        \node[vertex,shift={(4,0)}] (u33) at (-0.6,2.4) {};
        \node[vertex,shift={(4,0)}] (u43) at (0.6,2.4) {};
        \foreach \x in {3,4} {\draw[black,line width=0.5pt] (v2) -- (u\x1) ;}
        
        \foreach \x in {1,2,3,4}
        {\draw[black,line width = 0.5pt] (u\x1) -- (u\x2) -- (u\x3);}
    \end{tikzpicture}
    \\
    \hline
    Monomials
    &
    $x_4^2x_7$
    &
    $x_7x_8$
    &
    $ x_4x_{11}$
    &
    $x_{15}$\\
    \hline
    
    \end{NiceTabular}
    \caption{Subgraphs containing all non-spine edges of a proper $3$-caterpillar, and their corresponding monomials.}
    \label{tab:subgraph--monomials}
\end{table}

The Lemma \ref{lem:u-poly-R-poly equiv} along with \eqref{eq:chromatic-U-equivalence} asserts that the chromatic symmetric function determines the $\mathcal{L}$-polynomial. We prove that the isomorphism classes of the proper $q$-caterpillars correspond to the $\mathcal{L}$-equivalence classes of integer compositions via map $\varphi$. The following lemma helps in determining the irreducible factorization of the integer compositions associated with the proper $q$-caterpillars. 
\begin{lemma}{\label{lem:compfact}}
    Let $q \geq 2$ and $h$ be positive integers such that $q$ does not divide $h$. Let $\gamma$ be an integer composition whose each component is $h$ modulo $q$ and the greatest common divisor (gcd) of all components is $1$. Then either $\gamma$ is irreducible or its irreducible factorization is $\gamma = (1^m) \circ \omega$ where $(1^m)$ denotes the integer composition of length $m$ with all components equal to $1$.
\end{lemma}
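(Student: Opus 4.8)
The plan is to exploit the congruence condition on the parts of $\gamma$ against the way the product $\circ$ merges parts through near-concatenation. First I would record an elementary sublemma: suppose $\gamma = \alpha \circ \beta$ is \emph{any} factorization with $\l(\beta) \geq 2$. Each block $\beta^{\odot \alpha_i}$ begins with $\beta_1,\dots,\beta_{\l(\beta)-1}$ and ends with $\beta_{\l(\beta)}$, and the blocks are joined by plain concatenation, so every part of $\beta$ occurs verbatim as a part of $\gamma$; hence $\beta_j \equiv h \pmod q$ for all $j$. The key observation is that whenever some $\alpha_i \geq 2$, the near-concatenation inside $\beta^{\odot \alpha_i}$ creates a \emph{join part} equal to $\beta_{\l(\beta)} + \beta_1$, which is then itself a part of $\gamma$ and so $\equiv h \pmod q$; but $\beta_{\l(\beta)} + \beta_1 \equiv 2h \pmod q$, forcing $h \equiv 0 \pmod q$ and contradicting $q \nmid h$. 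Thus, provided $\l(\beta) \geq 2$, every part of $\alpha$ equals $1$, i.e. $\alpha = (1^m)$; and in that case $\gamma = \alpha \circ \beta$ is just the $m$-fold concatenation $\beta \cdot \beta \cdots \beta$, so $\beta$ again has all parts $\equiv h \pmod q$ and $\gcd$ of parts equal to $1$.

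With this in hand I would pass to the irreducible factorization $\gamma = \eta_1 \circ \eta_2 \circ \cdots \circ \eta_k$ guaranteed by Theorem~\ref{thm:factorize}, and assume $\gamma$ is not irreducible, i.e. $k \geq 2$. Writing $\beta = \eta_2 \circ \cdots \circ \eta_k$, I would first rule out $\l(\beta) = 1$: a length-one $\beta$ is a single part $D = |\eta_2|\cdots|\eta_k|$, and since each $\eta_i$ is a non-identity composition we have $|\eta_i| \geq 2$, so $D \geq 2$; but then every part of $\gamma = \eta_1 \circ (D)$ is divisible by $D$, contradicting $\gcd = 1$. Hence $\l(\beta) \geq 2$, and the sublemma applies with $\alpha = \eta_1$ to give $\eta_1 = (1^m)$, where $m \geq 2$ because $\eta_1$ is a non-identity irreducible and the only all-ones composition of length $1$ is the identity.

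Finally I would show $k = 2$. Since $\eta_1 = (1^m)$ is all-ones, the second clause of the sublemma gives that $\beta = \eta_2 \circ \cdots \circ \eta_k$ again satisfies the hypotheses of the lemma (parts $\equiv h \pmod q$, $\gcd = 1$). If $k \geq 3$ then $\beta$ is reducible with irreducible factorization $\eta_2 \circ \cdots \circ \eta_k$, so the identical argument applied to $\beta$ forces $\eta_2 = (1^{m'})$ to be all-ones as well; but then $\eta_1 \circ \eta_2 = (1^{mm'})$ is a trivial factorization (both factors have all parts equal to $1$), contradicting the definition of an irreducible factorization. Therefore $k = 2$, and $\gamma = (1^m) \circ \omega$ with $\omega = \eta_2$ irreducible, as claimed. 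The main obstacle is not the congruence computation itself, which is short once the join part is identified as the obstruction, but the careful disposal of the degenerate length-one case, where the $\gcd = 1$ hypothesis is exactly what is needed to prevent $\beta$ (or a single-part factor) from collapsing the argument.
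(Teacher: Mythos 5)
Your proof is correct, and its engine is identical to the paper's: the merged part $\beta_{\ell(\beta)}+\beta_1 \equiv 2h \pmod{q}$ produced by near-concatenation whenever some part of the left factor exceeds $1$, which is incompatible with every part of $\gamma$ being $\equiv h \pmod{q}$, together with the gcd hypothesis to dispose of a length-one right factor. The difference is organizational. The paper proceeds by induction on $\ell(\gamma)$: it takes an \emph{arbitrary} non-trivial factorization $\gamma = \zeta \circ \eta$, shows $\zeta$ is all-ones by the congruence argument, and then applies the inductive hypothesis to $\eta$, reassembling the irreducible factorization at the end (which silently uses $(1^r)\circ(1^s)=(1^{rs})$ and that the reassembled factorization is again irreducible). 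You instead run the congruence argument directly on the unique irreducible factorization $\eta_1\circ\cdots\circ\eta_k$, concluding that $\eta_1$ is all-ones, and then cap $k$ at $2$: if $k\geq 3$, the identical argument applied to $\eta_2\circ\cdots\circ\eta_k$ (which inherits the lemma's hypotheses by the plain-concatenation clause of your sublemma) would force $\eta_2$ to be all-ones as well, making $\eta_1\circ\eta_2$ a trivial factorization and contradicting the definition of an irreducible factorization. Your route trades the induction for a heavier reliance on the uniqueness theorem; in exchange it makes transparent exactly why at most one all-ones factor can occur, and it avoids verifying that the recursively assembled factorization is irreducible, a point the paper's proof glosses over. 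Both proofs treat the degenerate length-one case the same way, via the gcd hypothesis.
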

\begin{proof}
    We may assume that $\gamma$ is not irreducible. We prove using induction on length of $\gamma$. Let $\gamma = \zeta \circ \eta$ be a non-trivial factorization of $\gamma$. We claim that each component of $\zeta$ must be equal to $1$. Assume to the contrary that $\zeta$ contains at least one component greater than $1$, and let $i$ be the least index with the $i^{\mathrm{th}}$ component $\zeta_i > 1$. The gcd of all components of $\gamma$ being $1$ implies that the length of $\eta$ must be at least $2$. Since $\gamma_1 = \eta_1$ and $\gamma_{\l(\gamma)} = \eta_{\l(\eta)}$, both $\eta_1$ and $\eta_{\l(\eta)}$ are congruent to $h$ modulo $ q $. For $k=\l(\eta)\mult i$, consider the $k^{\mathrm{th}}$ component of $\gamma = \zeta \circ \eta$. By the given hypothesis, we get $\gamma_k$ to be congruent to $h$ modulo $q$, but the factorization implies     $$
   \varepsilon_k = \left(\zeta \circ \eta\right)_k = \eta_1 + \eta_{\l(\eta)} \equiv 2h \,({\mathrm{mod\;\,} q}).
    $$
   This is not possible because $h$ is non-zero modulo $q$. Therefore $\zeta$ must have all the components equal to 1, that is, $\gamma = (1^r) \circ \eta$ for some $r \geq 2$. Note that $\eta$ satisfies the given hypothesis and its length $\l(\eta) < \l(\gamma)$. Using induction, either $\eta$ is irreducible or its irreducible factorization is $(1^s) \circ \omega$, and consequently, the irreducible factorization of $\gamma$ is $(1^r) \circ \eta$ or $(1^{rs}) \circ \omega$, respectively. Thus $\gamma$ admits the required irreducible factorization.
\end{proof}

Using Lemma \ref{lem:compfact}, we can conclude that the proper $q$-caterpillars are distinguished by the chromatic symmetric functions up to isomorphism. 

\begin{bproof}[Proof of Theorem \ref{thm:proper-p-caterpillars}]
    Let $q\geq 2$. Let $S$ and $T$ be two proper $q$-caterpillars with the same chromatic symmetric function. Lemma \ref{lem:u-poly-R-poly equiv} implies that the $\mathcal{L}$-polynomial of $\varphi(S)$ and $\varphi(T)$ are equal as well. As a consequence of Remark \ref{rem:comp-cat}, it suffices to prove that the equivalence class $[\varphi(T)]_{\mathcal{L}} = \{\varphi(T),{\varphi(T)}^*\}$.
    If the gcd of all components of $\varphi(T)$ is $1$, then by Lemma \ref{lem:compfact} either $\varphi(T)$ is irreducible or its irreducible factorization is $(1^r)\circ \omega$. On the other hand, if the gcd of all components is $d$ which is greater than $1$, then factorize $\varphi(T) = \varepsilon \circ d$. Note that the gcd of all components of $\varepsilon$ is 1, and each component is congruent to $h$ modulo $q$, where $h$ is the least positive integer satisfying $d {\cdot} h \equiv 1 \mod{q}$. By Lemma \ref{lem:compfact}, either $\varepsilon$ is irreducible or its irreducible factorization must be  $(1^r) \circ \omega$ for some $r\geq 2$. This implies that the irreducible factorization of $\varphi(T)$ is $\varepsilon \circ d$ or $(1^r) \circ \omega \circ d $. In either case, the irreducible factorization of $\varphi(T)$ contains at most one non-palindrome composition. This, along with Theorem \ref{thm:factorize} concludes that $[\varphi(T)]_{\mathcal{L}} = \{\varphi(T),{\varphi(T)}^*\}$. This completes the proof.
\end{bproof}
\section*{Acknowledgements} We  thank Jos\'{e} Aliste-Prieto for valuable comments and suggestions that helped improve the presentation significantly. We also thank Anna de Mier for providing valuable feedback.





\noindent G. Arunkumar\\
\textsc{Department of Mathematics, IIT Madras, Chennai 600036.}\\
\textit{Email address: }\texttt{\href{mailto:garunkumar@iitm.ac.in}{garunkumar@iitm.ac.in}}\\

\medskip 

\noindent Narayanan Narayanan\\
\textsc{Department of Mathematics, IIT Madras, Chennai 600036.}\\
\emailad{naru@iitm.ac.in}\\

\medskip 

\noindent Raghavendra Rao B. V.\\
\textsc{Department of Computer Science and  Engineering,  IIT Madras, Chennai 600036.}\\
\emailad{bvrr@iitm.ac.in}\\

\medskip

\noindent Sagar S. Sawant\\
\textsc{Department of Mathematics,  IIT Madras, Chennai 600036.}\\
\emailad{sagar@smail.iitm.ac.in}\\

\end{document}